\newtheorem{theorem}{Theorem}[section]
\theoremstyle{definition}
\theoremstyle{remark}
\newtheorem{remark}[theorem]{Remark}
\numberwithin{equation}{section}
\begin{document}
\title[Partially Gaussian Stationary Stochastic Processes in Discrete Time]{Partially Gaussian Stationary Stochastic Processes in Discrete Time}

\author{K. R. Parthasarathy}
\address{Theoretical Statistics \& Mathematics Unit, Indian Statistical Institute, Delhi Centre, 7, S. J. S. Sansanwal Marg, New Delhi - 110016, India}
\email{krp@isid.ac.in}



\keywords{Gaussian distribution, strictly stationary process, partially gaussian process.}

 \begin{abstract}
We present here an elementary example, for every fixed positive integer $k,$ of a strictly stationary nongaussian stochastic process in discrete time, all of whose $k$-marginals are gaussian.

\vskip0.1in
\noindent {\it AMS Classification:} 60G15, \,\,60G10, \,\, 60G09.
 \end{abstract}

\maketitle

\section{Introduction}
It is well known that, for every positive integer $n>1,$ there exists a probability distribution in $\mathbb{R}^n$ which is not gaussian but has all its $(n-1)$-dimensional marginal distributions gaussian. (See, for example, Section 10.3 in Stoyanov \cite{js}.) Using the finer theory of pathwise stochastic integrals and martingale methods, F\"oller, Wu and Yor \cite{hf} have shown that, for every positive integer $k,$ there exists a variety of nongaussian stochastic processes with continuous trajectories in the interval $[0,\infty)$ which have the same $k$-dimensional marginals as the standard brownian motion process. Here we present an elementary example, for every fixed positive integer $k > 1,$ a discrete time stationary stochastic process which is not gaussian but has all its $(k-1)$-marginals gaussian. However, we do not know how to construct such processes in continuous time.

\section{The basic construction}

Let $k > 1$ be any fixed positive integer and let $\nu$ be a probability distribution in $\mathbb{R}^k,$ which is not gaussian but has all its $(k-1)$-marginals gaussian with mean $\mathbf{0}$ and covariance matrix identity. For example, we may choose $\nu$ to have the probability density function
$$\psi (\mathbf{x}) = (2 \pi)^{-\frac{k}{2}} \left \{ 1 + x_1 x_2 \cdots x_k \quad e ^{-\frac{1}{2} |\mathbf{x}|^{2}}  \right \} e ^{-\frac{1}{2} |\mathbf{x}|^{2}}$$
where $\mathbf{x} = ( x_1, x_2, \ldots, x_k).$ If $(X_1, X_2, \ldots, X_k)$ is an $\mathbb{R}^k$-valued random variable with distribution $\nu,$  then the sequence $X_1, X_2, \ldots, \widehat{X}_i, \ldots, X_k$ with the $i$-th term omitted consists of i.i.d. $N(0,1)$ random variables, for each $i.$

Now consider a bilateral sequence $\{(X_{n1}, X_{n2}, \ldots  X_{nk}), - \infty < n < \infty\}$ of i.i.d $\mathbb{R}^k$-valued random variables with the common distribution $\nu$ as described in the preceding paragraph. Define
$$Y_n = X_{nk} + X_{n+1 \,\, k-1} + X_{n+2 \,\,k-2} + \cdots + X_{n+k-1\,\,1}, - \infty < n < \infty. $$
It is to be noted that the sum of the two suffixes in each summand on the right hand side is equal to $n+k.$

\begin{theorem}\label{thm2.1}
The sequence $\{Y_n, - \infty < n < \infty \}$ is strictly stationary, $(k-1)$-step independent with every $(k-1)$-dimensional marginal being gaussian with mean $\mathbf{0}$ and covariance matrix $kI,$ $I$ being the identity matrix of order $k-1.$ In particular, $\{Y_n\}$ is ergodic.  
\end{theorem}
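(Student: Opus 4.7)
The plan is to establish four properties in turn: strict stationarity, ergodicity, $(k-1)$-step independence, and the joint Gaussianity of every $(k-1)$-marginal with covariance $kI_{k-1}$.

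Strict stationarity is formal: writing $\mathbf{X}_m=(X_{m,1},\ldots,X_{m,k})$, the formula $Y_n=\sum_{j=1}^{k}X_{n+k-j,\,j}$ exhibits $Y_n$ as a fixed shift-equivariant linear functional of the i.i.d.\ sequence $\{\mathbf{X}_m\}_{m\in\mathbb{Z}}$, so $\{Y_n\}$ inherits stationarity. Ergodicity then follows because the i.i.d.\ driving sequence is ergodic and the image of an ergodic process under a measure-preserving shift-equivariant map is ergodic; alternatively it follows from the $m$-dependence established next. For $(k-1)$-step independence, I would note that $Y_n$ depends only on the rows $n,n+1,\ldots,n+k-1$, so whenever $|n-m|\ge k$ the underlying sets of rows for $Y_n$ and $Y_m$ are disjoint, and independence of rows forces independence of $Y_n$ and $Y_m$.

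The heart of the proof is the claim that for arbitrary indices $n_1<n_2<\cdots<n_{k-1}$ the vector $(Y_{n_1},\ldots,Y_{n_{k-1}})$ is $N(\mathbf{0},kI)$. The key combinatorial observation is this: for each fixed row index $a$, the set of column indices $j$ such that $X_{a,j}$ actually appears as a summand in some $Y_{n_i}$ equals $\{\,a-n_i+k : 1\le i\le k-1,\ n_i\le a\le n_i+k-1\,\}$, and because the $n_i$'s are distinct this set has cardinality at most $k-1$. By the defining property of $\nu$, any family of at most $k-1$ components of a single row is jointly distributed as independent $N(0,1)$'s; combined with independence of different rows, the entire collection of $X_{a,j}$'s that occur in $(Y_{n_1},\ldots,Y_{n_{k-1}})$ is a pool of mutually independent standard normals. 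Since each $Y_{n_i}$ is a fixed linear combination drawn from this pool, the vector $(Y_{n_1},\ldots,Y_{n_{k-1}})$ is jointly Gaussian, and the covariance reduces to bookkeeping: $\operatorname{Var}(Y_{n_i})=k$, while $\operatorname{Cov}(Y_{n_i},Y_{n_j})=0$ for $i\ne j$ because any row shared by $Y_{n_i}$ and $Y_{n_j}$ contributes two distinct and therefore independent components. The diagonal structure $kI$ then upgrades joint Gaussianity to independence of the coordinates of every $(k-1)$-marginal.

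The main obstacle, and essentially the only non-routine step, is the combinatorial counting that caps the number of row-$a$ components used by any $(k-1)$-tuple $(Y_{n_i})$ at $k-1$: that is exactly the cardinality threshold that allows the $(k-1)$-marginal hypothesis on $\nu$ to be invoked. Once this count is in hand, Gaussianity, the $kI$ covariance, $(k-1)$-step independence, and ergodicity all follow at once from independence of rows together with standard properties of Gaussian vectors.
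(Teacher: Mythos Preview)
Your proof is correct and follows essentially the same route as the paper: the heart of both arguments is the observation that each fixed row index $a$ contributes at most $k-1$ distinct components $X_{a,j}$ to the vector $(Y_{n_1},\ldots,Y_{n_{k-1}})$, which is precisely what allows the $(k-1)$-marginal hypothesis on $\nu$ to be invoked row by row. The only noteworthy difference is that the paper establishes stationarity by writing out an explicit convolution formula for the law of $(Y_{n+1},\ldots,Y_{n+m})$ (which, as a byproduct, also shows this law is nongaussian for $m\ge k$), whereas you appeal directly to shift-equivariance of the i.i.d.\ driving array.
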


\begin{proof}
 Fix an integer $m$ and consider the two sets $\{Y_n, n \leq m\}$ and  $\{Y_n, n \geq m+k\}.$ Since $Y_m = X_{mk} + X_{m+1 \,\,k-1} +\cdots+ X_{m+k-1 \,\,1}$ and $Y_{m+k}  = Y_{m+k \,\, k} +  X_{m+k+1 \,\,k-1} + \cdots +  X_{m+2 \,k-1 \,\,1}$ and the first suffix in the last summand in the definition of $Y_m$ is less than the first suffix in the first summand in the definition of $Y_{m+k}$ it follows that the two sets $\{ Y_n, n \leq m \}$ and $\{Y_n, n \ge m+k \}$ are independent. In other words $\{Y_n\}$ is a $(k-1)$-step independent process.

We now look at the column vector-valued random variable
$$\left [  \begin{array}{c} Y_{n+1} \\  Y_{n+2} \\ \vdots \\ Y_{n+m}  \end{array} \right ] = \left [  \begin{array}{l} X_{n+1\,\,k} + X_{n+2 \,\,k-1} + \cdots + X_{n+k\,\,1 }  \\   X_{n+2\,\,k} + X_{n+3 \,\,k-1} + \cdots + X_{n+k+1\,\,1 }  \\    \\ X_{n+m\,\,k} + X_{n+m+1 \,\,k-1} + \cdots + X_{n+m+k-1\,\,1 }   \end{array} \right ] $$
and express it as $S_1 + S_2 + S_3$ where
\begin{eqnarray*}
S_1 &=& \left [ \begin{array}{c} X_{n+1\,\,k} \\ 0 \\ \vdots \\ 0  \end{array} \right ] + \left [ \begin{array}{c} X_{n+2 \,\,k-1} \\ X_{n+2 \,\,k} \\ 0 \\ \vdots \\ 0 \end{array} \right ] + \cdots  + \left [ \begin{array}{c} X_{n+k-1\,\,2} \\  X_{n+k-1\,\,3} \\ \vdots \\ X_{n+k-1\,\,k} \\ 0 \\ \vdots \\ 0 \end{array} \right ],\\ 
S_2 &=& \left [ \begin{array}{c} X_{n+k\,\,1} \\ X_{n+k\,\,2}\\ \vdots \\  X_{n+k\,\,k} \\ 0 \\ \vdots \\ 0 \end{array} \right ] + \left [ \begin{array}{c} 0 \\ X_{n+k+1 \,\,1} \\ X_{n+k+1 \,\,2} \\ \vdots \\ X_{n+k+1 \,\,k}  \\ 0 \\ \vdots \\ 0 \end{array} \right ] + \cdots  + \left [ \begin{array}{c} 0 \\ \vdots \\ 0 \\ X_{n+m\,\,1} \\  X_{n+m\,\,2} \\ \vdots \\ X_{n+m\,\,k} \end{array} \right ],\\
S_3 &=& \left [ \begin{array}{c} 0 \\ \vdots \\ 0 \\ X_{n+m+1\,\,1} \\  X_{n+m+1\,\,2} \\ \vdots \\ X_{n+m+1\,\,k-1}  \end{array} \right ] + \left [ \begin{array}{c} 0 \\ \vdots \\ 0 \\ X_{n+m+2\,\,1} \\  X_{n+m+2\,\,2} \\ \vdots \\ X_{n+m+2\,\,k-2} \end{array} \right ] + \cdots  + \left [ \begin{array}{c} 0 \\ \vdots \\ \vdots \\ 0 \\ X_{n+m+k-1\,\,1} \end{array} \right ].                                
\end{eqnarray*}
In each column on the right hand side of $S_1$ or $S_3$ there are at most $k-1$ nonzero entries whereas in each column on the right hand side of $S_2$ there are exactly $k$ entries. All the column vectors appearing in $S_1, S_2, S_3$ together are mutually independent. By the choice of the measure $\nu,$ $S_1$ and $S_3$ are gaussian random vectors. Denote by $\nu([i,j])$ the $(j-i+1)$-dimensional standard normal distribution imbedded in $\mathbb{R}^m$ so that the first $i-1$ and the last $m-j$ coordinates are $0$ when $1 \leq i \leq j \leq m.$ Similarly, denote by $\nu ([j, k+j-1])$ the $k$-dimensional distribution $\nu$ imbedded in $\mathbb{R}^m$ with the first $j-1$ and the last $m-k-j+1$ coordinates $0$ for $1 \leq j \leq m-k+1,$ assuming $m \ge k.$ Then it follows that the random variables $Y_{n+1}, Y_{n+2}, \ldots, Y_{n+m}$ expressed as a single column vector has the $m$-dimensional distribution $\nu_m$ (in $\mathbb{R}^m$) given by
\begin{eqnarray*}
\nu_m &=& \mu ([1,1]) \ast \mu([1,2]) \ast \ldots \ast \mu ([1, k-1]) \\
&& \ast \nu ([1,k]) \ast \nu ([2,k+1]) \ast \ldots \ast \nu ([m-k+1, m) \\
&& \ast \mu ([m-k+2, m]) \ast \mu ([m-k+3, m]) \ast \ldots \ast \mu ([m,m]),
\end{eqnarray*}
for every $m \ge k.$ Since $\nu_m$ is independent of $n$ it follows that $\{Y_n\}$ is a strictly stationary process. Since $\nu([1,k])$ is nongaussian it is clear that $\nu_m$ is not gaussian for every $m \ge k.$

We now observe that $Y_n,$ being a sum of $k$ independent $N(0,1)$ random variables, is an $N(0,k)$ variable with mean $0$ and variance $k.$ Now consider the pair $(Y_0, Y_m).$ If $m \ge k$ we have already seen that $Y_0$ and $Y_m$ are independent. If $m<k,$ we write
\begin{eqnarray*}
\left [\begin{array}{c}  Y_0 \\ Y_m \end{array} \right ] &=& \left [\begin{array}{c} X_{0k} + X_{1 \,\,k-1} + \cdots + X_{m-1 \,\, k-m+1} \\ 0 \end{array} \right ] +  \left [\begin{array}{c} X_{m\,\, k-m} \\ X_{m \,\,k} \end{array} \right ] \\
&&  +  \left [\begin{array}{c} X_{m+1 \,\, k-m-1} \\ X_{m+1 \,\,k-1}  \end{array} \right ] + \cdots + +  \left [\begin{array}{c} X_{k-1 \,\, 1} \\ X_{k-1 \,\,m+1} \end{array} \right ]\\
&&  +  \left [\begin{array}{c} 0 \\ X_{km} + X_{k+1 \,\,m-2} + \cdots + X_{k+m-1\,\,1}  \end{array} \right ].
\end{eqnarray*}
Now the special choice of $\nu$ implies that $Y_0$ and $Y_m$ are independent $N(0,k)$ random variables. Stationarity of the process $\{Y_n\}$ implies that $Y_{n_{1}}$ and $Y_{n_{2}}$ are independent $N(0,k)$ random variables for any $n_1, n_2.$

Now consider, for any $n_1 < n_2 < \cdots < n_{k-1}$ the random vector
$$\widetilde{\mathbf{Y}} = \left [ \begin{array}{c} Y_{n_{1}} \\  Y_{n_{2}} \\ \vdots \\ Y_{n_{k-1}} \end{array} \right ] = \left [ \begin{array}{c} X_{n_{1}k} + X_{n_{1} +1 \,\,k-1} + \cdots + X_{n_{1} + k-1\,\,1} \\ X_{n_{2}k} + X_{n_{2} +1 \,\,k-1} + \cdots + X_{n_{2} + k-1\,\,1} \\ \vdots \\  X_{n_{k-1} k} + X_{n_{k-1}+1 \,\,k-1} + \cdots + X_{n_{k-1} + k-1 \,\,1}  \end{array} \right ].$$ 
The right hand side can be expressed as a sum of column vectors in which the entries in each column are either $0$ or an $X_{rs}$ where the first suffix $r$ is fixed and the second suffix takes at most $k-1$ values from the set $\{1,2,\ldots,k\}.$ The different column vectors are independent and by the choice of $\nu$ each column has a multivariate gaussian distribution. Thus $\widetilde{\mathbf{Y}}$ is gaussian. Since any two $Y_i$ and $Y_j$ are independent where $k > 2,$ it follows that $Y_{n_{1}}, Y_{n_{2}}, \ldots, Y_{n_{k-1}}$ are i.i.d $N(0,k)$ random variables. This completes the proof.
\end{proof}
\vskip0.2in
\begin{remark}
 From the proof of Theorem \ref{thm2.1} it is clear that any $k-1$ of the random variables $\{Y_n\}$ are i.i.d $N(0,k).$ This motivates the introduction of the following notion of limited exchangeability. We say that a stationary random process $\{Z_n, - \infty < n < \infty\}$ is  $k$-{\it exchangeable} if any $Z_{n_{1}},Z_{n_{2}}, \ldots,Z_{n_{k}}$ has the same distribution for any $k$-point set $\{ n_1, n_2, \ldots, n_k\} \subset \mathbb{Z}.$ The probability measures of all such $k$-exchangeable stationary processes constitute a convex set. One wonders what are the extreme points of this convex set.
\end{remark}

\end{document}